\newtheorem{theorem}[equation]{Theorem}
\newtheorem{lemma}[equation]{Lemma}
\newtheorem{proposition}[equation]{Proposition}
\theoremstyle{remark}
\newtheorem{remark}[equation]{Remark}
\numberwithin{equation}{subsection}
\newcommand{\FF}{\mathbb{F}}
\newcommand{\ZZ}{\mathbb{Z}}
\newcommand{\QQ}{\mathbb{Q}}
\newcommand{\NN}{\mathbb{N}}
\newcommand{\bu}{\mathbf{u}}
\newcommand{\bv}{\mathbf{v}}
\newcommand{\bj}{\mathbf{j}}
\newcommand{\cA}{\mathcal{A}}
\newcommand{\cZ}{\mathcal{Z}}
\DeclareMathAlphabet{\matheur}{U}{eur}{m}{n}
\newcommand{\fs}{\mathfrak{s}}
\newcommand{\fz}{\mathfrak{z}}
 \DeclareMathOperator{\wt}{wt}
\DeclareMathOperator{\Li}{Li}
\begin{document}
\title[On finite Carlitz multiple polylogarithms]{{\large{On finite Carlitz multiple polylogarithms}}}

\author{Chieh-Yu Chang}
\address{Department of Mathematics, National Tsing Hua University, Hsinchu City 30042, Taiwan
  R.O.C.}

\email{cychang@math.nthu.edu.tw}

\author{Yoshinori Mishiba }
\address{Department of General Education,
National Institute of Technology, Oyama College, Japan}
\email{mishiba@oyama-ct.ac.jp }

\thanks{The first author was partially supported by MOST Grant
  102-2115-M-007-013-MY5.  }
\thanks{The second author is supported by JSPS KAKENHI Grant Number 15K17525.}

\subjclass[2010]{Primary  11R58; Secondary 11M38}

\date{November 8, 2016}

\begin{abstract}
In this paper, we define finite Carlitz multiple polylogarithms and show that every finite multiple zeta value over the rational function field $\FF_{q}(\theta)$ is an  $\FF_{q}(\theta)$-linear combination of finite Carlitz multiple polylogarithms at integral points. It is completely compatible with the formula for Thakur MZV's established in \cite{C14}.
\end{abstract}

\keywords{Finite Carlitz multiple polylogarithms, finite multiple zeta values, Anderson-Thakur polynomials}

\maketitle

\section{Introduction}
Let $A:=\FF_{q}[\theta]$ be the polynomial ring in the variable $\theta$ over the finite field $\FF_{q}$ of $q$ elements with characteristic $p$, and $k$ be the quotient field of $A$. We denote by $k_{\infty}$ the completion of $k$ with respect to the place at infinite. We denote by $A_{+}$ the set of monic polynomials in $A$.

The characteristic $p$ multiple zeta values (abbreviated as MZV's) were introduced by Thakur~\cite{T04}: for $\fs=(s_{1},\ldots,s_{r})\in \NN^{r}$,
\begin{equation}\label{E:DefMZV}
\zeta_{A}(s_{1},\ldots,s_{r}):=\sum \frac{1}{a_{1}^{s_{1}}\cdots a_{r}^{s_{r}}  }\in k_{\infty},
\end{equation}
where $a_{1},\ldots,a_{r}$ run over all monic polynomials in $A$ satisfying \[\deg_{\theta} a_{1}>\deg_{\theta} a_{2}>\cdots> \deg_{\theta} a_{r}.\] The values above play the positive characteristic analogue of classical multiple zeta values (see~\cite{Zh16}), and they are in fact non-vanishing by Thakur~\cite{T09}. One knows further that MZV's occur as periods of certain mixed Carlitz-Tate $t$-motives (see~\cite{AT09}).

In the seminal paper~\cite{AT90}, Anderson and Thakur introduced the $n$th tensor power of the Carlitz module and established a deep connection between $\zeta_{A}(n)$ and the $n$th Carlitz polylogarithm for each positive integer $n$. The $n$th Carlitz polylogarithm is the function field analogue of the classical $n$th  polylogarithm defined by the series
\[ \Li_{n}(z):=\sum_{i=0}^{\infty} \frac{z^{q^{i}}}{L_{i}^{n}},  \]
where $L_{0}:=1$ and $L_{i}:=(\theta-\theta^{q})\cdots (\theta-\theta^{q^{i}})$ for $i\in \NN$. When $n=1$, the series above is the Carlitz logarithm (see~\cite{Ca35, Go96, T04}). What Anderson and Thakur showed is that $\zeta_{A}(n)$ is a $k$-linear combination of $\Li_{n}$ at some integral points in $A$.

Inspired by the classical multiple polylogarithms (see~\cite{W02, Zh16}), the first author of the present paper defined for each $\fs=(s_{1},\ldots,s_{r})\in \NN^{r}$ the $\fs$th Carlitz multiple polylogarithm (abbreviated as CMPL):
\begin{equation}\label{E:CMPL}
 \Li_{\fs}(z_{1},\ldots,z_{r}):=\sum_{i_{1}>\cdots>i_{r}\geq 0} \frac{z_{1}^{q^{i_{1}}}\cdots z_{r}^{q^{i_{r}}}  }{L_{i_{1}}^{s_{1}}\cdots L_{i_{r}}^{s_{r}}   } .
\end{equation}
Note that in the classical setting, there is a simple identity that a multiple zeta value $\zeta(\fs)$ is the specialization of  the $\fs$th multiple polylogarithm (several variables) at $(1,\ldots,1)$. Using the theory of Anderson-Thakur polynomials~\cite{AT90} the first author~\cite{C14} derived an explicit formula expressing $\zeta_{A}(\fs)$ as a $k$-linear combination of $\Li_{\fs}$ at some integral point (see Theorem~\ref{T:Chang}) generalizing Anderson-Thakur's work to arbitrary depth.

The study of this paper is inspired by the work of Kaneko and Zagier~\cite{KZ} on finite multiple zeta values, which are in the $\QQ$-algebra
\[\cA:=\prod_{p}\ZZ /(p) \big/ \bigoplus_{p}\ZZ /(p) ,\]
where $p$ runs over all prime numbers. In analogy with $\cA$, it is natural to define the $k$-algebra $\cA_{k}$ (see~(\ref{E:DefcAk})). One then naturally defines a finite version of Thakur MZV's (\ref{E:DefMZV}), which we (also) call finite multiple zeta values (abbreviated as FMZV's) denoted by $\zeta_{\cA_{k}}(\fs)$. See (\ref{E:DefFMZV}) for the definition and note that Thakur also defines FMZV's in \cite{T16} (see also a variant in \cite{PP15}). In this paper we define a finite version of CMPL's (\ref{E:CMPL}), called finite Carlitz multiple polylogarithms (abbreviated as FCMPL's) and denoted by $\Li_{\cA_{k},\fs}(z_{1},\ldots,z_{r})$ for $\fs\in \NN^{r}$ (see (\ref{E:DefFCMPL}) for the precise definition). We then have that the FCMPL's satisfy the stuffle relations (see \S~\ref{Sec:stuffle}). The main result in this paper is to establish an explicit formula expressing each FMZV $\zeta_{\cA_{k}}(\fs)$  as a $k$-linear combination of $\Li_{\cA_{k},\fs}$ at some integral points (see Theorem~\ref{T:MainThm}). It is interesting that the formula for $\zeta_{\cA_{k}}(\fs)$ completely matches with the formula for $\zeta_{A}(\fs)$ (cf.~Theorem~\ref{T:Chang} and Theorem~\ref{T:MainThm}), and its proof highly relies on the theory of Anderson-Thakur polynomials~\cite{AT90}.

At the end of the introduction, we give a list of some interesting problems for future research.

\begin{enumerate}
\item[$\bullet$] Connection bwteen Thakur MZV's and FMZV's (cf.~\cite{KZ}).
\item[$\bullet$] Non-vanishing problems for FMZV's (cf.~\cite{ANDTR16, T09}).
\item[$\bullet$] Logarithmic and period interpretation of FCMPL's and FMZV's (cf.~\cite{AT90, AT09}).
\item[$\bullet$] Transcendence theory for  FCMPL's and FMZV's (cf.~\cite{ABP04, P08, C14, C16, CM16, CP12, CY07, M14, Yu91, Yu97}).
\item[$\bullet$] Relation between FCMPL's and $t$-motives (cf.~\cite{AT90, AT09, C14}).
\end{enumerate}

\section{Finite multiple zeta values}
\subsection{The definition of FMZV's}

Following Kaneko and Zagier, we define the $k$-algebra
\begin{equation}\label{E:DefcAk}
\mathcal{A}_{k}:=\prod_{P}A/(P) \big/  \bigoplus_{P}A/(P)  ,
\end{equation}
where $P$ runs over all monic irreducible polynomials in $A$. In analogy with classical finite MZV's, one considers the following finite version of ($\infty$-adic) Thakur MZV's denoted by $\zeta_{\mathcal{A}_{k}}(s_{1},\ldots,s_{r})$ for any $r$-tuple $(s_{1},\ldots,s_{r})\in \NN^{r}$. One first defines for a monic irreducible polynomial $P\in A$,
\[\zeta_{\mathcal{A}_{k}}(s_{1},\cdots,s_{r})_{P}:=\sum \frac{1}{a_{1}^{s_{1}}\cdots a_{r}^{s_{r}}} \hbox{ mod }P \in A/(P),\]
where the sum runs over all monic polynomials $a_{1},\ldots,a_{r}\in A$ satisfying
\[\deg P> \deg a_{1}>\cdots>\deg a_{r}  .\]
One then defines the finite multiple zeta value abbreviated as FMZV (see also~\cite{T16}):
\begin{equation}\label{E:DefFMZV}
\zeta_{\mathcal{A}_{k}}(s_{1},\cdots,s_{r}):=(\zeta_{\mathcal{A}_{k}}(s_{1},\cdots,s_{r})_{P} )\in \mathcal{A}_{k}.
\end{equation}
We call $r$ the depth and $\wt(\fs):=\sum_{i=1}^{r}s_{i}$ the weight of the presentation $\zeta_{\cA_{k}}(\fs)$.

The motivation of our study in this paper comes from the identity in \cite{C14} that any ($\infty$-adic) Thakur  MZV is a $k$-linear combination of Carlitz multiple polylogarithms (abbreviated as CMPL's) at integral points (generalization of the formula of Anderson-Thakur~\cite{AT90} for the depth one case). Our main result is to establish the same identity for the FMZV's.

\subsection{The algebra of FMZV's} In \cite{T10}, Thakur proved that the $\FF_{p}$-vector space spanned by MZV's forms an algebra. Using Thakur's theory~\cite{T10}, one finds the same phenomenon for FMZV's in the following theorem. In other words, the $k$-vector space spanned by FMZV's forms a $k$-algebra that is defined over $\FF_{p}$.

\begin{proposition}\label{P:FMZVAlgebra}
Let $\cZ\subseteq \cA_{k}$ be the $\FF_{p}$-vector subspace spanned by all FMZV's. Then $\cZ$ forms an $\FF_{p}$-algebra.
\end{proposition}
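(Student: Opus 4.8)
The plan is to reduce the statement to the known multiplicativity of Thakur's $\infty$-adic MZV's. The key observation is that the definition of $\zeta_{\cA_k}(\fs)$ truncates the defining sum at degree $\deg P$ and reduces modulo $P$; the product of two such truncated sums expands, via the usual stuffle (harmonic) product, into a $\ZZ$-linear (in fact $\FF_p$-linear) combination of truncated sums of higher depth, exactly as in the classical and $\infty$-adic settings. So first I would write down explicitly, for a fixed monic irreducible $P$, the product
\[
\zeta_{\cA_k}(\fs)_P \cdot \zeta_{\cA_k}(\fs')_P = \sum \frac{1}{a_1^{s_1}\cdots a_r^{s_r}} \cdot \sum \frac{1}{b_1^{s_1'}\cdots b_{r'}^{s_{r'}'}} \bmod P,
\]
where both inner sums range over monic polynomials of degree strictly less than $\deg P$ in strictly decreasing order of degree.

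\textbf{Reduction to the stuffle product.} The second step is to combine the two sums into a single sum over all interleavings of the indices $a_i$ and $b_j$. Because the summation conditions only involve the degrees $\deg a_i$ and $\deg b_j$ (all bounded by $\deg P$), one partitions the combined index set according to whether a given pair of polynomials from the two sums has equal, greater, or lesser degree, and within the equal-degree case whether the two monic polynomials coincide. The crucial point is that when two monic polynomials $a_i, b_j$ have the same degree but are distinct, they contribute off-diagonal terms, while the diagonal contributions (where $a_i = b_j$, forcing a merge of exponents $s_i + s_j'$) produce lower-depth FMZV's. This is precisely the combinatorial mechanism underlying Thakur's proof in~\cite{T10} that the $\FF_p$-span of $\infty$-adic MZV's is an algebra; the degree-based ordering is identical, the only change being the extra constraint $\deg a_i < \deg P$ and the reduction modulo $P$, neither of which interferes with the algebraic identity.

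\textbf{Compatibility across all $P$.} The third step is to observe that the stuffle identity obtained for each $P$ is governed by the \emph{same} $\FF_p$-coefficients and the \emph{same} index tuples $\fs''$ appearing in Thakur's formula, independent of $P$. Therefore the componentwise products assemble into a single identity in $\cA_k$ expressing $\zeta_{\cA_k}(\fs)\cdot\zeta_{\cA_k}(\fs')$ as an $\FF_p$-linear combination of FMZV's $\zeta_{\cA_k}(\fs'')$. Since these coefficients lie in $\FF_p \subseteq k$ and do not depend on $P$, passing to the quotient by $\bigoplus_P A/(P)$ preserves the relation, and hence $\cZ$ is closed under multiplication. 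It is an $\FF_p$-subspace by construction, so it is an $\FF_p$-algebra.

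\textbf{Main obstacle.} I expect the main technical point to be verifying that Thakur's stuffle relations, which were derived for the full $\infty$-adic sums, transfer verbatim to the degree-truncated sums modulo $P$. The concern is that truncation at $\deg a_i < \deg P$ might break an interleaving identity near the boundary degree $\deg P - 1$. The resolution is that the truncation is a \emph{uniform} upper bound applied symmetrically to all indices, so the bijection between the product index set and the union of stuffle index sets respects the constraint $\deg < \deg P$ on every coordinate simultaneously; no boundary terms are lost or created. Making this bijection-with-constraint argument precise—rather than merely citing~\cite{T10}—is the one place where care is genuinely required.
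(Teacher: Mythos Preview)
Your overall strategy---reduce to an identity among the truncated power sums $S_{<d}(\fs)$ that holds uniformly in $d$, then reduce modulo $P$---is exactly the paper's. The paper introduces $S_{<d}(\fs)$ so that $\zeta_{\cA_k}(\fs)_P = S_{<\deg P}(\fs)\bmod P$, cites \cite[Cor.~2.2.10]{To15} for the fact that $S_{<d}(\fs)S_{<d}(\fs')$ is an $\FF_p$-linear combination of $S_{<d}(\fs'')$ with coefficients and tuples $\fs''$ independent of $d$, and reduces mod $P$. Your ``compatibility across all $P$'' paragraph and your observation that the uniform bound $\deg<\deg P$ causes no boundary trouble are both correct and match the paper.

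Where your write-up goes wrong is the description of the underlying combinatorics. You describe the classical harmonic stuffle: split the equal-degree case into ``$a_i=b_j$ (diagonal, merge exponents)'' versus ``$a_i\neq b_j$ (off-diagonal)''. That works over $\ZZ$ because distinct integers are totally ordered, so the off-diagonal terms slot into one of the two strict interleavings. Here the summation variables are monic polynomials ordered only by \emph{degree}; two distinct monic polynomials of the same degree $d$ are incomparable, and the off-diagonal block $\sum_{\substack{\deg a=\deg b=d\\ a\neq b}} a^{-s}b^{-s'}$ is \emph{not} a piece of any $S_{<d}(\fs'')$. Equivalently, $S_d(s)S_d(s')\neq S_d(s+s')$ in general. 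Expressing this block in terms of truncated power sums with $\FF_p$-coefficients independent of $d$ is precisely the nontrivial content of Thakur's result \cite{T10} (made explicit at the level of $S_{<d}$ in \cite{To15}); it is not a bijective stuffle argument. So your ``main obstacle'' is misplaced: truncation is harmless, but the equal-degree case cannot be handled by the diagonal/off-diagonal split you outline, and you must invoke \cite{T10} or \cite{To15} as a black box rather than as a repackaging of classical stuffle.
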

\begin{proof} It suffices to show that for arbitrary $\fs\in \NN^{r}$ and $\fs'\in \NN^{r'}$, there exists $\fs_{1},\ldots,\fs_{m}\in \cup_{\ell}\NN^{\ell}$ with ${\rm{wt}}(\fs_{i})={\rm{wt}}(\fs)+{\rm{wt}}(\fs')$, and $f_{1},\ldots,f_{m}\in \FF_{p}$ so that
\[  \zeta_{\mathcal{A}_{k}}(\fs)_{P} \zeta_{\mathcal{A}_{k}}(\fs')_{P}=\sum_{i=1}^{m}f_{i} \zeta_{\mathcal{A}_{k}}(\fs_{i})_{P}\in A/(P)\]
for all primes $P\in A_{+}$.

For any $r$-tuple $\fs=(s_{1},\ldots,s_{r})$ and $d \in \NN$, we put
\[S_{<d}(\fs):=\sum \frac{1}{a_{1}^{s_{1}}\cdots a_{r}^{s_{r}}} \in k,\]
where the sum runs over all monic polynomials $a_{1},\ldots,a_{r}\in A$ satisfying
\[d > \deg a_{1}>\cdots>\deg a_{r}  .\]
It follows that
\begin{equation}\label{E:zetaSd}
\zeta_{\cA_{k}}(\fs)_{P} =S_{< \deg P}(\fs) \hbox{ mod }P .
\end{equation}
Note that \cite[Cor.~2.2.10]{To15} implies that $S_{<\deg P}(\fs) S_{< \deg P}(\fs')$ is an $\FF_{p}$-linear combination of some $S_{<\deg P}(\fs'')$ with $\wt(\fs'')=\wt(\fs)+\wt(\fs')$,
where the $\fs''$'s and the coefficients in $\FF_{p}$ are independent of $\deg P$, whence the desired result by modulo $P$.
\end{proof}

\begin{remark}
The authors were informed by Thakur that his student Shuhui Shi has derived several identities on these FMZV's with $k$-coefficients, including
 Proposition~\ref{P:FMZVAlgebra}.
\end{remark}

\begin{remark}
The authors were informed by H.-J. Chen that as such the case above, the techniques in \cite{Chen15} can be used to derive an explicit formula for the product of two finite single zeta values in terms of linear combinations of some FMZV's.
\end{remark}

\section{Finite Carlitz multiple polylogarithms and the main result}

In what follows, for any tuple $\fs\in \NN^{r}$ we define its associated finite Carlitz multiple polylogartihm (abbreviated as FCML)
\[ \Li_{\mathcal{A}_{k},\fs}:k^{r}\rightarrow \mathcal{A}_{k} .\]
 Fixing any $r$-tuple $\fs=(s_{1},\ldots,s_{r})\in \NN^{r}$ and an $r$-tuple of independent variables $\fz=(z_{1},\ldots,z_{r})$, we define
 the quotient ring
 \[\cA_{k,\fz}:=\prod_{P} A[\fz]/(P) \big/ \bigoplus_{P} A[\fz]/(P),\]
 where \[ A[\fz]=A[z_{1},\ldots,z_{r}]  .\]
 We then define
\begin{equation}\label{E:DefFCMPL}
\Li_{\mathcal{A}_{k},\fs}(\fz):=\left( \Li_{\mathcal{A}_{k},\fs}(z_{1},\ldots,z_{r})_{P} \right)\in \cA_{k,\fz} ,
\end{equation}
where
\[\Li_{\mathcal{A}_{k},\fs}(z_{1},\ldots,z_{r})_{P}:=\sum_{\deg P>i_{1}>\cdots> i_{r}\geq 0 }\frac{z_{1}^{q^{i_{1}}}\cdots z_{r}^{q^{i_{r}}}}{L_{i_{1}}^{s_{1}}\cdots L_{i_{r}}^{s_{r}}} \hbox{ mod }P \in A[z_{1},\ldots,z_{r}]/(P)  .\]
We note that $P$ does not divide $(\theta^{q^{i}}-\theta)$ if and only if $\deg P\nmid i$, and hence $\Li_{\cA_{k},\fs}(\fz)$ is well-defined in $\cA_{k,\fz}$. Furthermore, $\Li_{\cA_{k},\fs}(\bu)$ is well-defined in $\cA_{k}$ for any $\bu=(u_{1},\ldots,u_{r})\in k^{r}$ since $\Li_{\cA_{k},\fs}(\bu)_{P}$ is defined in $A/(P)$ for $P$ not dividing the denominators of $u_{1},\ldots,u_{r}$. Such as the $\infty$-adic case, we call $r$ the depth and $\wt(\fs)$ the weight of the presentation $\Li_{\cA_{k},\fs}(\bu)$.

\subsection{Stuffle relations}\label{Sec:stuffle}
Let $\fz'=(z_{1}',\ldots,z_{r'}')$ be an $r'$-tuple of variables independent from the $z_{i}$'s of $\fz$. For each prime $P\in A_{+}$ we consider the natural multiplication map
\[A[\fz]/(P)\times A[\fz']/(P) \rightarrow A[\fz,\fz']/(P)   ,\]
which induces the following map
\begin{equation}\label{E:MulAkz}
\cA_{k,\fz}\times \cA_{k,\fz'}\rightarrow \cA_{k,(\fz,\fz')}.
\end{equation}
We denote by $\Li_{\cA_{k},\fs}(\fz)\cdot \Li_{\cA_{k},\fs'}(\fz')\in \cA_{k,(\fz,\fz')}$ the image of $(\Li_{\cA_{k},\fs}(\fz), \Li_{\cA_{k},\fs'}(\fz'))\in \cA_{k,\fz}\times \cA_{k,\fz'}$ under the map~(\ref{E:MulAkz}).

Note that since the indexes of the finite sum $\Li_{\cA_{k},\fs}(\fz)_{P}$ are in the total ordered set $\ZZ_{\geq 0}$, the classical stuffle relations (for multiple polylogarithms) work here by componentwise multiplication. We describe the details as the following.

Given $\mathfrak{s}=(s_{1},\ldots,s_{r})\in \NN^{r}$ and $\mathfrak{s}'=(s_{1}',\ldots,s_{r'}')\in \NN^{r'}$, we fix a positive integer $r''$ with ${\rm{max}}\left\{ r,r' \right\}\leq r''\leq r+r'$. We consider a pair consisting of two vectors $\bv, \bv'\in  \ZZ_{\geq 0}^{r''}$ which are required to satisfy $\bv+\bv'\in \NN^{r''}$ and which are obtained from the following ways. One vector $\bv$ is obtained  from $\mathfrak{s}$ by inserting $(r''-r)$ zeros in all possible ways (including in front and at end), and another vector $\bv'$ is obtained from $\mathfrak{s}'$ by inserting $(r''-r')$ zeros in all possible ways (including in front and at end).

 One observes from the definition  that FCMPL's satisfy the stuffle relations which are analogous to the classical case (cf. \cite{W02}):
\begin{equation}\label{E:stuffle}
  \Li_{\cA_{k},\fs}(\fz)\cdot \Li_{\cA_{k},\fs'}(\fz') =\sum_{(\bv,\bv')}\Li_{\cA_{k}, \bv+\bv'}(\fz''),
\end{equation}where the pair $(\bv,\bv')$ runs over all the possible expressions as above for all $r''$ with ${\rm{max}}\left\{ r,r' \right\}\leq r''\leq r+r'$. For each such $\bv+\bv'\in \NN^{r''}$, the component $z_{i}''$ of $\fz''$ is $z_{j}$ if the $i$th component of $\bv$ is $s_{j}$ and the $i$th component of $\bv'$ is $0$, it is $z_{\ell}'$ if the $i$th component of $\bv$ is $0$ and the $i$th component of $\bv'$ is $s_{\ell}'$, and finally it is $z_{j}z_{\ell}'$ if the $i$th component of $\bv$ is $s_{j}$ and the $i$th component of $\bv'$ is $s_{\ell}'$.

For example, for $r=r'=1$ (\ref{E:stuffle}) yields
\[ {\rm{Li}}_{\cA_{k},s}(z) \cdot {\rm{Li}}_{\cA_{k},s'}(z')={\rm{Li}}_{\cA_{k},(s,s')}(z,z')+ {\rm{Li}}_{\cA_{k},(s',s)}(z',z)+{\rm{Li}}_{\cA_{k}, s+s'}(zz'). \]
 For $r=1,r'=2$, one has
\[
    \begin{array}{rl}
     {\rm{Li}}_{\cA_{k}, s}(z) \cdot {\rm{Li}}_{\cA_{k},(s_{1}',s_{2}')}(z_{1}',z_{2}')  &={\rm{Li}}_{\cA_{k},(s,s_{1}',s_{2}')}(z,z_{1}',z_{2}')+ {\rm{Li}}_{\cA_{k},(s_{1}',s,s_{2}')}(z_{1}',z,z_{2}')+{\rm{Li}}_{\cA_{k},(s_{1}',s_{2}',s)}(z_{1}',z_{2}',z) \\
       & +{\rm{Li}}_{\cA_{k},(s+s_{1}',s_{2}')}(z z_{1}',z_{2}')+ {\rm{Li}}_{\cA_{k},(s_{1}',s+s_{2}')}(z_{1}',zz_{2}').\\
    \end{array}
  \]
\begin{remark}
From the stuffle relations above, we see that the product of $\Li_{\cA_{k},\fs}(\bu)$ and $\Li_{\cA_{k},\fs'}(\bu')$ is an $\FF_{p}$-linear combinations of some FCMPL's of the same weight $\wt(\fs)+\wt(\fs')$ at rational points over $k$.
\end{remark}
\subsection{The formula for Thakur MZV's} Let $t, x, y$ be new independent variables. We put $G_{0}(y):=1$ and define polynomials $G_{n}(y)\in \FF_{q}[t,y]$ for $n\in \NN$ by the product
\[
G_{n}(y)=\prod_{i=1}^{n}\left( t^{q^{n}}-y^{q^{i}} \right).
\]
For a non-negative integer $n$, we express $n = \sum n_{i} q^{i}$ ($0 \leq n_{i} \leq q-1$) as the base $q$-expansion. We define the Carlitz factorial $\Gamma_{n+1} := \prod D_{i}^{n_{i}}$, where $D_{0} := 1$ and $D_{i} := \prod_{j=0}^{i-1} (\theta^{q^{i}}-\theta^{q^{j}})$ for $i \in \NN$.
For $n=0,1,2,\ldots$, we define the sequence of Anderson-Thakur polynomials $H_{n}\in A[t]$ by the generating function identity
\[
\left( 1-\sum_{i=0}^{\infty} \frac{ G_{i}(\theta) }{ D_{i}|_{\theta=t}} x^{q^{i}}  \right)^{-1}=\sum_{n=0}^{\infty} \frac{H_{n}}{\Gamma_{n+1}|_{\theta=t}} x^{n}.
\]

In what follows, we fix an $r$-tuple of positive integers $\fs=(s_{1},\ldots,s_{r})\in \NN^{r}$. For each $1\leq i\leq r$, we expand the Anderson-Thakur polynomial $H_{s_{i}-1}\in A[t]$ as
\begin{equation}\label{E:expansionHs}
H_{s_{i}-1}=\sum_{j=0}^{m_{i}} u_{ij}t^{j},
\end{equation}
where $u_{ij}\in A$ satisfying
\[ |u_{ij}|_{\infty}<q^{\frac{s_{i}q}{q-1}}  \ \hbox{ and } \ u_{im_{i}} \neq 0   .\]
We put
\[ J_{\fs}:=\left\{0,1,\ldots,m_{1} \right\} \times \cdots \times \left\{0,1,\ldots,m_{r} \right\}     .\]
For each $\bj=(j_{1},\ldots,j_{r})\in J_{\fs}$, we set
\[ \bu_{\bj}:=(u_{1j_{1}},\ldots,u_{rj_{r}})\in A^{r},   \]
and
\[ a_{\bj}:=a_{\bj}(t):=t^{j_{1}+\cdots+j_{r}}    .\]
Set $\Gamma_{\fs}:=\Gamma_{s_{1}}\cdots\Gamma_{s_{r}}\in A$. The following formula is established in \cite{C14}.
\begin{theorem} \label{T:Chang}
For each $\fs=(s_{1},\ldots,s_{r})\in \NN^{r}$, we have that
\[ \zeta_{A}(\fs)=\frac{1}{\Gamma_{\fs}}\sum_{\bj \in J_{\fs}}a_{\bj}(\theta)\Li_{\fs}(\bu_{\bj}).  \]
\end{theorem}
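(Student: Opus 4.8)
The plan is to reduce the assertion to the depth-one power sum identity of Anderson--Thakur and then propagate it through the iterated-sum structure of $\zeta_{A}(\fs)$. For $s\in\NN$ and $d\geq 0$, write the single-degree power sum
\[
S_{d}(s):=\sum_{\substack{a\in A_{+}\\ \deg_{\theta}a=d}}\frac{1}{a^{s}}\in k .
\]
Grouping the monic polynomials in~(\ref{E:DefMZV}) according to their degrees, one obtains the iterated expression
\[
\zeta_{A}(\fs)=\sum_{d_{1}>\cdots>d_{r}\geq 0}S_{d_{1}}(s_{1})\cdots S_{d_{r}}(s_{r}).
\]
On the polylogarithm side, for each $\bj\in J_{\fs}$ the definition~(\ref{E:CMPL}) unwinds to
\[
\Li_{\fs}(\bu_{\bj})=\sum_{i_{1}>\cdots>i_{r}\geq 0}\frac{u_{1j_{1}}^{q^{i_{1}}}\cdots u_{rj_{r}}^{q^{i_{r}}}}{L_{i_{1}}^{s_{1}}\cdots L_{i_{r}}^{s_{r}}},
\]
which converges in $k_{\infty}$ precisely because the bound $|u_{ij}|_{\infty}<q^{s_{i}q/(q-1)}$ from~(\ref{E:expansionHs}) is the radius-of-convergence condition for the $s_{i}$th Carlitz polylogarithm.

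The heart of the argument is the depth-one identity relating $S_{d}(s)$ to the coefficients of the Anderson--Thakur polynomial: writing $H_{s-1}=\sum_{j}u_{j}t^{j}$ as in~(\ref{E:expansionHs}), one has for every $d\geq 0$
\[
\Gamma_{s}\,L_{d}^{s}\,S_{d}(s)=\sum_{j}\theta^{\,j}\,u_{j}^{q^{d}}.
\]
In words, $\Gamma_{s}S_{d}(s)$ equals the $d$th Frobenius twist of the $A$-coefficients of $H_{s-1}$, evaluated at $t=\theta$, divided by $L_{d}^{s}$. This is the deep input from~\cite{AT90}; it is engineered exactly so that the Frobenius exponent $q^{d}$ on $u_{j}$ lines up with the $q^{i}$ occurring in the summand of $\Li_{s}$.

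Granting this, I would substitute the identity into each factor $S_{d_{\ell}}(s_{\ell})$ of the iterated sum for $\zeta_{A}(\fs)$, factor out $\Gamma_{\fs}=\Gamma_{s_{1}}\cdots\Gamma_{s_{r}}$, and expand the resulting product over the indices $j_{1},\ldots,j_{r}$. After interchanging the sums, $\zeta_{A}(\fs)$ becomes
\[
\frac{1}{\Gamma_{\fs}}\sum_{\bj\in J_{\fs}}\theta^{\,j_{1}+\cdots+j_{r}}\sum_{d_{1}>\cdots>d_{r}\geq 0}\frac{u_{1j_{1}}^{q^{d_{1}}}\cdots u_{rj_{r}}^{q^{d_{r}}}}{L_{d_{1}}^{s_{1}}\cdots L_{d_{r}}^{s_{r}}}.
\]
The inner sum is exactly $\Li_{\fs}(\bu_{\bj})$, while $\theta^{j_{1}+\cdots+j_{r}}=a_{\bj}(\theta)$; collecting the terms yields the claimed formula.

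The main obstacle is the depth-one power sum identity itself, whose proof depends on the combinatorial construction of the Anderson--Thakur polynomials and is taken here as input from~\cite{AT90}. The remaining point requiring care is analytic: one must justify that the nested sums over $d_{1}>\cdots>d_{r}$ and over $\bj\in J_{\fs}$ may be rearranged and interchanged in $k_{\infty}$. This is immediate in the nonarchimedean setting, since the strict inequality $\deg_{\theta}u_{ij}<s_{i}q/(q-1)$ together with $\deg_{\theta}L_{d}=q+\cdots+q^{d}\to\infty$ forces every term $u_{ij}^{q^{d}}/L_{d}^{s_{i}}$ to tend to $0$; the matching of the strict-inequality index sets on the two sides is then a direct comparison.
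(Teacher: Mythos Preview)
Your argument is correct and is exactly the method underlying this formula: decompose $\zeta_{A}(\fs)$ as an iterated sum of degree-$d$ power sums, invoke the Anderson--Thakur interpolation identity $\Gamma_{s}S_{d}(s)=H_{s-1}^{(d)}|_{t=\theta}/L_{d}^{s}$ (this is Lemma~\ref{L:AT}), expand the products of $H_{s_{\ell}-1}^{(d_{\ell})}|_{t=\theta}$ via~(\ref{E:expansionHs}), and regroup. Note that the present paper does not itself prove Theorem~\ref{T:Chang} but cites it from~\cite{C14}; however, your proof is line-for-line the same computation the paper carries out for the finite analogue (Theorem~\ref{T:MainThm}), with the only difference being that there the index $i_{1}$ is truncated below $\deg P$ so no convergence discussion is needed.
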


\subsection{The main result}
Our main result is to show that the formula above is valid for the finite level:
\begin{theorem}\label{T:MainThm}

For each $\fs=(s_{1},\ldots,s_{r})\in \NN^{r}$, we have that
\[ \zeta_{\cA_{k}}(\fs)=\frac{1}{\Gamma_{\fs}}\sum_{\bj \in J_{\fs}}a_{\bj}(\theta)\Li_{\cA_{k},\fs}(\bu_{\bj}).  \]

\end{theorem}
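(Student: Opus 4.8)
The plan is to prove the identity one prime at a time and to reduce the whole statement to a single truncated identity in $k$. By~(\ref{E:zetaSd}) we have $\zeta_{\cA_k}(\fs)_P=S_{<\deg P}(\fs)\bmod P$, while the $P$-component of the proposed right-hand side is $\frac{1}{\Gamma_\fs}\sum_{\bj}a_\bj(\theta)\,\Li_{\cA_k,\fs}(\bu_\bj)_P\bmod P$. Hence it suffices to establish, for every $d\in\NN$, the finite identity in $k$
\begin{equation*}
S_{<d}(\fs)=\frac{1}{\Gamma_\fs}\sum_{\bj\in J_\fs}a_\bj(\theta)\sum_{d>i_1>\cdots>i_r\ge0}\frac{u_{1j_1}^{q^{i_1}}\cdots u_{rj_r}^{q^{i_r}}}{L_{i_1}^{s_1}\cdots L_{i_r}^{s_r}},\tag{$\star$}
\end{equation*}
and then to specialize $d=\deg P$ and reduce modulo $P$.

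To prove ($\star$) I would isolate the depth-one, degree-graded power-sum formula of Anderson--Thakur, which is the computational heart of Theorem~\ref{T:Chang}: writing $H_{n-1}=\sum_j u_j t^j$, for each $i\ge0$ and $n\in\NN$ one has, in $k$,
\begin{equation*}
\sum_{\substack{a\in A_+\\ \deg a=i}}\frac{1}{a^n}=\frac{1}{\Gamma_n L_i^n}\sum_{j}u_j^{q^i}\theta^j.\tag{AT}
\end{equation*}
Summing (AT) over all $i\ge0$ recovers the depth-one case of Theorem~\ref{T:Chang}, which fixes the normalization. Since monic polynomials of distinct degrees are summed independently, the multivariable power sum factorizes,
\begin{equation*}
\sum_{\substack{a_\ell\in A_+,\ \deg a_\ell=i_\ell\\ (1\le\ell\le r)}}\frac{1}{a_1^{s_1}\cdots a_r^{s_r}}=\prod_{\ell=1}^{r}\Big(\sum_{\substack{a\in A_+\\ \deg a=i_\ell}}\frac{1}{a^{s_\ell}}\Big),
\end{equation*}
so that $S_{<d}(\fs)=\sum_{d>i_1>\cdots>i_r\ge0}\prod_{\ell=1}^r\big(\sum_{\deg a=i_\ell}a^{-s_\ell}\big)$. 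Substituting (AT) into each factor, pulling out $\Gamma_\fs=\Gamma_{s_1}\cdots\Gamma_{s_r}$, expanding the product $\prod_\ell(\sum_{j_\ell}u_{\ell j_\ell}^{q^{i_\ell}}\theta^{j_\ell})$, and interchanging the two finite sums (over $\bj\in J_\fs$ and over the $i_\ell$) yields ($\star$), because $a_\bj(\theta)=\theta^{j_1+\cdots+j_r}$.

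Finally I would specialize $d=\deg P$. For every $P$ with $P\nmid\Gamma_\fs$, both sides of ($\star$) lie in the localization of $A$ at $P$: on the left each denominator $a_1^{s_1}\cdots a_r^{s_r}$ is a product of monic polynomials of degree $<\deg P$, hence coprime to the irreducible $P$; on the right, $\Gamma_\fs$ is coprime to $P$ by hypothesis, and each $L_{i_\ell}$ with $i_\ell<\deg P$ is coprime to $P$ since, as noted in the text, $P\mid(\theta^{q^j}-\theta)$ forces $\deg P\mid j$, which is impossible for $1\le j\le i_\ell<\deg P$. Reduction modulo $P$ is therefore a ring homomorphism on both sides; it turns the inner truncated sum at $\bu_\bj$ into $\Li_{\cA_k,\fs}(\bu_\bj)_P$ and the left-hand side into $\zeta_{\cA_k}(\fs)_P$. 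Collecting these congruences over all $P$, the finitely many $P\mid\Gamma_\fs$ being irrelevant in the quotient defining $\cA_k$, gives the asserted equality in $\cA_k$.

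The one genuinely delicate point is (AT). Theorem~\ref{T:Chang} is an identity of convergent series in $k_\infty$ and so cannot simply be truncated; I must instead check that its proof is graded by degree, i.e.\ that (AT) holds as an \emph{exact} identity in $k$ for each individual index $i$. This is precisely the Anderson--Thakur interpolation property of the polynomials $H_{n-1}$, and extracting it with the correct normalization---rather than invoking Theorem~\ref{T:Chang} as a black box---is the crux. Once (AT) is in hand, the factorization, the truncation, and the reduction modulo $P$ are all formal.
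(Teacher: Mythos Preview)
Your proposal is correct and follows essentially the same route as the paper's own proof: factorize $S_{<\deg P}(\fs)$ as a sum over index chains $i_1>\cdots>i_r$ of products of degree-$i_\ell$ power sums, apply the Anderson--Thakur interpolation formula (your (AT), which is exactly Lemma~\ref{L:AT}) to each factor, expand the product over $\bj\in J_\fs$, and reduce mod $P$ for $P\nmid\Gamma_\fs$. Your emphasis that (AT) must be invoked as a degree-by-degree identity in $k$ (rather than extracted from Theorem~\ref{T:Chang} as a black box) is precisely the point, and the paper handles it the same way by stating Lemma~\ref{L:AT} separately.
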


For each nonnegative integer $i$, we let $A_{i+}$ be the set of all monic polynomials of degree $i$ in $A$.
For each $i \in \ZZ$ and $H = \sum u_{j} t^{j} \in k[t]$, we define $H^{(i)} := \sum u_{j}^{q^{i}} t^{j}$.
To prove the theorem above, we need the following interpolation formula of Anderson and Thakur~\cite{AT90}.
\begin{lemma}\label{L:AT}
Fixing $s\in \NN$, for any nonnegative integer $i$ we have
\[ \frac{H_{s-1}^{(i)}|_{t=\theta} }{L_{i}^{s}}=\Gamma_{s}\sum_{a\in A_{i+}}\frac{1}{a^{s}}     .\]
\end{lemma}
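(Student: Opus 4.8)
The plan is to fix $i$ and prove the stated equality for all $s\in\NN$ at once, by packaging both sides into generating functions in an auxiliary variable $x$ and matching them. Writing $S_i(s):=\sum_{a\in A_{i+}}a^{-s}\in k$, the assertion is equivalent to the identity in $\power{k}{x}$
\[
\sum_{s\geq 1}\frac{H_{s-1}^{(i)}|_{t=\theta}}{\Gamma_{s}}\,x^{s-1}=\sum_{s\geq 1}L_i^{s}S_i(s)\,x^{s-1}.
\]
The left-hand side is obtained from the defining generating function of the Anderson--Thakur polynomials by applying the twist $(\cdot)^{(i)}$ to the $A$-coefficients and then specializing $t=\theta$. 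Since $\Gamma_{s}|_{\theta=t}$ and $D_{i'}|_{\theta=t}$ lie in $\FF_{q}[t]$, they are fixed by $(\cdot)^{(i)}$ and return to $\Gamma_{s}$, $D_{i'}$ after $t=\theta$; hence
\[
\sum_{s\geq 1}\frac{H_{s-1}^{(i)}|_{t=\theta}}{\Gamma_{s}}\,x^{s-1}=\left(1-\sum_{i'\geq 0}\frac{G_{i'}(\theta)^{(i)}|_{t=\theta}}{D_{i'}}\,x^{q^{i'}}\right)^{-1}.
\]

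First I would simplify the twisted denominator. Since $(\cdot)^{(i)}$ acts as the ring endomorphism $\theta\mapsto\theta^{q^{i}}$, $t\mapsto t$, a direct computation gives $G_{i'}(\theta)^{(i)}|_{t=\theta}=\prod_{j=1}^{i'}(\theta^{q^{i'}}-\theta^{q^{i+j}})$, which vanishes whenever $i'>i$ because the factor with $j=i'-i$ is zero. Thus the series truncates at $i'=i$, and the right-hand side above is the reciprocal of an explicit $\FF_{q}$-linear-affine polynomial in $x$ of $x$-degree $q^{i}$.

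Next I would put the power-sum side in closed form. Because the monic polynomials of degree $i$ are exactly $\{\theta^{i}+b:\deg b<i\}$, one has $\prod_{a\in A_{i+}}(w-a)=e_i(w-\theta^{i})$, where $e_i(z):=\prod_{b\in A,\ \deg b<i}(z-b)$ is the $\FF_{q}$-linear polynomial whose roots are the polynomials of degree $<i$; in particular $e_i(\theta^{i})=D_i$ and the (constant) derivative $e_i'$ equals the coefficient of $z$ in $e_i$, namely $D_i/L_i$. Logarithmic differentiation gives $\sum_{a\in A_{i+}}(w-a)^{-1}=e_i'/e_i(w-\theta^{i})$; expanding $\sum_{a}(a-w)^{-1}$ as a power series in $w$ and substituting $w=L_ix$ yields
\[
\sum_{s\geq 1}L_i^{s}S_i(s)\,x^{s-1}=\sum_{a\in A_{i+}}\frac{L_i}{a-L_ix}=\frac{-L_i\,e_i'}{e_i(L_ix-\theta^{i})}=\frac{-D_i}{e_i(L_ix-\theta^{i})}.
\]

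It therefore remains to identify the two closed forms, i.e. to show that the truncated twisted denominator equals $e_i(L_ix-\theta^{i})/(-D_i)$. Using $\FF_{q}$-linearity and $e_i(\theta^{i})=D_i$, this last expression is $1-e_i(L_ix)/D_i$, so the matching reduces to the coefficientwise identity, for $0\leq i'\leq i$,
\[
c_{i,i'}=\frac{D_i}{D_{i'}L_i^{q^{i'}}}\prod_{j=1}^{i'}\bigl(\theta^{q^{i'}}-\theta^{q^{i+j}}\bigr),
\]
where $c_{i,i'}$ is the coefficient of $z^{q^{i'}}$ in $e_i(z)$. This is the heart of the matter and the step I expect to be the main obstacle: it is a pure identity among the Carlitz constants $D_j,L_j$ and the factors $\theta^{q^{i'}}-\theta^{q^{i+j}}$, provable from the explicit Carlitz formula for the coefficients of $e_i$ together with the standard relations $D_j=[j]\,D_{j-1}^{q}$ and $L_j=-[j]\,L_{j-1}$ (with $[j]=\theta^{q^{j}}-\theta$). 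Matching the constant terms recovers the known identity $S_i(1)=1/L_i$, a useful consistency check. Once the two generating functions agree, extracting the coefficient of $x^{s-1}$ gives $H_{s-1}^{(i)}|_{t=\theta}/\Gamma_{s}=L_i^{s}S_i(s)$, which is the claim; all remaining work is routine sign and normalization bookkeeping.
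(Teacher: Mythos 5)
Your argument is correct, but it takes a genuinely different route from the paper, which offers no proof of this lemma at all: the paper simply quotes it as the interpolation formula of Anderson and Thakur~\cite{AT90}. Your generating-function derivation is a valid self-contained reconstruction (and is in fact close in spirit to the original one in \cite{AT90}). The individual steps check out: the twist $(\cdot)^{(i)}$ is a ring endomorphism of $k[t]$ fixing $\FF_{q}[t]$, so it passes through the formal inversion in the defining identity for the $H_{n}$, and the specialization $t=\theta$ is legitimate since $\Gamma_{n+1}\neq 0$ and $D_{i'}\neq 0$; the truncation is right, since for $i'>i$ the factor with $j=i'-i$ kills $\prod_{j=1}^{i'}(\theta^{q^{i'}}-\theta^{q^{i+j}})$; and the closed form $-D_{i}/e_{i}(L_{i}x-\theta^{i})=\bigl(1-e_{i}(L_{i}x)/D_{i}\bigr)^{-1}$ via $\FF_{q}$-linearity and $e_{i}(\theta^{i})=D_{i}$ is correct. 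Moreover, the step you flag as the expected main obstacle is a known classical identity and closes immediately: substituting $m=i+j-i'$ gives
\[
\prod_{j=1}^{i'}\bigl(\theta^{q^{i'}}-\theta^{q^{i+j}}\bigr)=\frac{L_{i}^{q^{i'}}}{L_{i-i'}^{q^{i'}}}
\]
in the paper's convention $L_{n}=\prod_{m=1}^{n}(\theta-\theta^{q^{m}})$, so your required coefficient identity reduces to $c_{i,i'}=D_{i}/(D_{i'}L_{i-i'}^{q^{i'}})$, which is precisely Carlitz's explicit formula for the coefficients of $e_{i}$ (see \cite{Go96}, Ch.~3; the usual statement carries a sign $(-1)^{i-i'}$ that is absorbed here by the sign convention for $L_{n}$). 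Your consistency checks ($c_{i,0}=D_{i}/L_{i}$, hence $S_{i}(1)=1/L_{i}$) agree with this. What your route buys is an elementary, citation-free proof directly from the generating-function definition of $H_{n}$ used in this paper; what the paper's citation buys is brevity, deferring both this identity and the coefficient bound $|u_{ij}|_{\infty}<q^{s_{i}q/(q-1)}$ to \cite{AT90}.
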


{\it{Proof of Theorem~\ref{T:MainThm}}}. It suffices to verify the identity for the $P$-component of the both sides of Theorem~\ref{T:MainThm} for primes $P$ with $\deg P \gg 0$. Let $P\in A_{+}$ satisfy $P\nmid \Gamma_{\fs}$. By definition, we have
\begin{eqnarray*}
\zeta_{\cA_{k}}(\fs)_{P} &=& \sum_{\substack{a_{1}, \dots, a_{r} \in A_{+} \\ \deg P > \deg a_{1} > \cdots > \deg a_{r}}} \dfrac{1}{a_{1}^{s_{1}} \cdots a_{r}^{s_{r}}} \hbox{ mod }P
= \sum_{\substack{\deg P > i_{1} > \cdots > i_{r} \geq 0 \\ a_{j} \in A_{i_{j} +}}} \dfrac{1}{a_{1}^{s_{1}} \cdots a_{r}^{s_{r}}} \hbox{ mod }P \\
&=& \sum_{\deg P > i_{1} > \cdots > i_{r} \geq 0} \ \sum_{a_{1} \in A_{i_{1} +}} \dfrac{1}{a_{1}^{s_{1}}}  \cdots \sum_{a_{r} \in A_{i_{r} +}} \dfrac{1}{a_{r}^{s_{r}}} \hbox{ mod }P \\
&=& \dfrac{1}{\Gamma_{\fs}} \sum_{\deg P > i_{1} > \cdots > i_{r} \geq 0} \dfrac{H_{s_{1}-1}^{(i_{1})}|_{t=\theta} \cdots H_{s_{r}-1}^{(i_{r})}|_{t=\theta}}{L_{i_{1}}^{s_{1}} \cdots L_{i_{r}}^{s_{r}}} \hbox{ mod }P,
\end{eqnarray*}
where the last equality comes from Lemma~\ref{L:AT}.

By (\ref{E:expansionHs}) we have
\begin{eqnarray*}
H_{s_{1}-1}^{(i_{1})}|_{t = \theta} \cdots H_{s_{r}-1}^{(i_{r})}|_{t = \theta}
&=& \sum_{j_{1} = 0}^{m_{1}} u_{1j_{1}}^{q^{i_{1}}} \theta^{j_{1}} \cdots \sum_{j_{r} = 0}^{m_{r}} u_{rj_{r}}^{q^{i_{r}}} \theta^{j_{r}} \\
&=& \sum_{\bj = (j_{1},\ldots,j_{r}) \in J_{\fs}} a_{\bj}(\theta) u_{1j_{1}}^{q^{i_{1}}} \cdots u_{rj_{r}}^{q^{i_{r}}}.
\end{eqnarray*}
It follows that
\begin{eqnarray*}
\zeta_{\cA_{k}}(\fs)_{P}
&=& \dfrac{1}{\Gamma_{\fs}} \sum_{\deg P > i_{1} > \cdots > i_{r} \geq 0} \ \sum_{\bj = (j_{1},\ldots,j_{r}) \in J_{\fs}} \dfrac{a_{\bj}(\theta) u_{1j_{1}}^{q^{i_{1}}} \cdots u_{rj_{r}}^{q^{i_{r}}}}{L_{i_{1}}^{s_{1}} \cdots L_{i_{r}}^{s_{r}}} \hbox{ mod }P \\
&=& \dfrac{1}{\Gamma_{\fs}} \sum_{\bj = (j_{1},\ldots,j_{r}) \in J_{\fs}} a_{\bj}(\theta) \sum_{\deg P > i_{1} > \cdots > i_{r} \geq 0} \dfrac{u_{1j_{1}}^{q^{i_{1}}} \cdots u_{rj_{r}}^{q^{i_{r}}}}{L_{i_{1}}^{s_{1}} \cdots L_{i_{r}}^{s_{r}}} \hbox{ mod }P \\
&=& \dfrac{1}{\Gamma_{\fs}} \sum_{\bj \in J_{\fs}} a_{\bj}(\theta) \Li_{\cA_{k},\fs}(\bu_{\bj})_{P},
\end{eqnarray*}
whence verifying Theorem~\ref{T:MainThm}.

\bibliographystyle{alpha}

\end{document}